\theoremstyle{plain}
\newtheorem{thm}{\protect\theoremname}[section]
\theoremstyle{plain}
\newtheorem{prop}[thm]{Proposition}
\newtheorem{cor}[thm]{Corollary}
\theoremstyle{definition}
\newtheorem{defn}[thm]{\protect\definitionname}
\newtheorem{exs}[thm]{Examples}
\newtheorem{con}[thm]{Construction}
\newtheorem{que}[thm]{Question}
\DeclareMathOperator{\Stab}{Stab} 
\DeclareMathOperator{\VC}{VC} 
\providecommand{\definitionname}{Definition}
\providecommand{\lemmaname}{Lemma}
\providecommand{\questionname}{Question}
\providecommand{\theoremname}{Theorem}
\title{Set Systems with Covering Properties and Low VC-Dimension}
\author{George Peterzil\, }
\address{Einstein Institute of Mathematics \\
The Hebrew University of Jerusalem \\ 91904 Jerusalem \\ Israel \vspace{7pt}}
\email{george.peterzil@mail.huji.ac.il}
\author{\,Johanna Steinmeyer}
\email{johanna.steinmeyer@mail.huji.ac.il}
\date{January 20, 2024}
\begin{document}

\maketitle
\vspace{-0.5cm}
\begin{abstract}
  Given natural numbers $k\leq s\leq n$, we ask: what is the minimal VC-dimension of a family $\mathcal{F}$ of $s$-subsets of $[n]$ that covers all $k$-subsets of $[n]$? We first show that for sufficiently large $n$ this number is always $k$, and construct families which give a lower bound for the actual growth of this stabilization point. 

\end{abstract}

\vspace{1cm}

Consider a family of sets $\mathcal{F}$.
We say that a set $A$ is \textit{shattered by $\mathcal{F}$} if $$2^A=\left\{A\cap S\mid S\in\mathcal{F}\right\}.$$
Define the \textit{VC-dimension} of~$\mathcal{F}$ to be the size of the largest finite set shattered by~$\mathcal{F}$, or $\infty$ if there is no such maximum. 

 Originiating from machine learning in~\cite{VC}, the notion of VC-dimension has since been further studied also by combinatorialists (e.g.~\cite{FKKP} and~\cite{AMY}, for an overview see~\cite{Dud}) as well as logicians (for a survey see~\cite{Simon}). 
 
 Consider the family of initial segments of the natural numbers. It demonstrates that one can cover all finite subsets of $\mathbb{N}$ with a family of finite sets of VC-dimension~$1$. Bays, Ben-Neria, Kaplan and Simon prove the following theorem, which can be seen as an uncountable analogue of the above fact:
\begin{thm}[\cite{Low}, Proposition 3.3 and Theorem 3.8]
    Every family of finite sets which covers every finite subset of the first uncountable cardinal has VC-dimension at least $2$, and there is such a family with VC-dimension exactly $2$.
\end{thm}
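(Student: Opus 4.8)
The statement has two halves — a lower bound (every family of finite sets covering all finite subsets of $\omega_1$ has VC-dimension at least $2$) and a construction realising the value $2$ — and the construction will automatically inherit the lower bound, giving ``exactly $2$''.

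\emph{Lower bound.} The plan is to assume $\mathcal F$ consists of finite sets, covers every finite subset of $\omega_1$, and $\VC(\mathcal F)\le 1$, and derive a contradiction by a Ramsey-type colouring. For a pair $\alpha<\beta$ the trace $\{\alpha,\beta\}$ is realised by covering, so as $\{\alpha,\beta\}$ is not shattered one of $\emptyset,\{\alpha\},\{\beta\}$ is not a trace; colour the pair $0$ if $\emptyset$ is not a trace (i.e.\ every member meets $\{\alpha,\beta\}$), $1$ if $\{\alpha\}$ is not a trace (every member containing $\alpha$ contains $\beta$), and $2$ otherwise (every member containing $\beta$ contains $\alpha$). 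There is no infinite $0$-homogeneous set: every member would miss such a set in at most one point, impossible for a finite member. By the Dushnik–Miller relation $\omega_1\to(\omega_1,\omega)^2$ there is then an uncountable $A$ with no $0$-coloured pair. On $A$ set $\alpha\to\beta$ for ``every member containing $\alpha$ contains $\beta$''; this is reflexive, transitive, and the colouring makes any two elements of $A$ comparable, so $\to$ is a total preorder. Its equivalence classes are finite (a member meeting an infinite class would contain all of it), so passing to a transversal yields an uncountable $A'$ on which $\to$ is a linear order. Finally, for $\alpha\in A'$ choose $S\in\mathcal F$ with $\alpha\in S$: then $\{x\in A':\alpha\to x\}\subseteq S\cap A'$ is finite, so in $(A',\to)$ every element has only finitely many elements above it; hence $\alpha\mapsto|\{x\in A':\alpha\to x\}|$ is injective into $\omega$, contradicting $|A'|=\aleph_1$.

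\emph{Construction.} I build $\mathcal F=\bigcup_{\alpha<\omega_1}\mathcal F_\alpha$ by recursion. Fix an enumeration $\omega_1=\{x_\xi:\xi<\omega_1\}$ and put $X_\alpha=\{x_\xi:\xi<\alpha\}$; the invariant is that $\mathcal F_\alpha$ is a countable family of finite subsets of $X_\alpha$, cofinal in $[X_\alpha]^{<\omega}$, with $\VC(\mathcal F_\alpha)\le 2$. At limits take unions: cofinality and countability are clear, and $\VC\le 2$ persists because a shattered set is finite, hence shattered inside some $\mathcal F_\alpha$. At a successor $\alpha+1$: being cofinal in the directed poset $[X_\alpha]^{<\omega}$, $\mathcal F_\alpha$ is upward-directed, and being countable it contains a chain $\mathcal C_\alpha$ (linearly ordered by inclusion) still cofinal in $[X_\alpha]^{<\omega}$; put
$$\mathcal F_{\alpha+1}=\mathcal F_\alpha\cup\bigl\{\{x_\alpha\}\cup C:C\in\mathcal C_\alpha\bigr\}.$$
Cofinality in $[X_{\alpha+1}]^{<\omega}$ is immediate. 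For $\VC(\mathcal F_{\alpha+1})\le 2$, let $G$ be a $3$-set: if $x_\alpha\notin G$ the new members trace $C\cap G$ with $C\in\mathcal C_\alpha\subseteq\mathcal F_\alpha$, so the traces on $G$ are unchanged; if $G=\{p,q,x_\alpha\}$, every trace on $G$ lies in $\{S\cap\{p,q\}:S\in\mathcal F_\alpha\}\cup\bigl\{\{x_\alpha\}\cup(C\cap\{p,q\}):C\in\mathcal C_\alpha\bigr\}$, and since a chain cannot shatter $\{p,q\}$ the second set has at most $3$ elements, so there are at most $4+3<2^3$ traces and $G$ is not shattered. Hence $\mathcal F$ covers all of $[\omega_1]^{<\omega}$ and has $\VC(\mathcal F)\le 2$; with the lower bound, $\VC(\mathcal F)=2$.

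The part I expect to require the most care is the lower bound: arranging the three-colouring so that each colour genuinely admits only small homogeneous sets (so that Dushnik–Miller applies), and then seeing that the linear order surviving the reduction is forced to be countable. The construction, by contrast, is essentially bookkeeping once one notices that adjoining a ``chain layer'' over each new point adds only boundedly many new traces to every pair.
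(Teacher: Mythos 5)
This statement is quoted from \cite{Low} (Proposition 3.3 and Theorem 3.8); the paper you are reading gives no proof of it, so there is no internal argument to compare yours against. Judged on its own, your proof is correct and self-contained. In the lower bound, the three-colouring is well defined because the covering property makes $\{\alpha,\beta\}$ itself a trace, so $\VC\le 1$ forces one of $\emptyset,\{\alpha\},\{\beta\}$ to be missing; the finiteness of members rules out an infinite $0$-homogeneous set, so Dushnik--Miller ($\omega_1\to(\omega_1,\omega)^2$, applied to the colouring ``$0$'' versus ``not $0$'') yields the uncountable $A$; comparability on $A$, finiteness of the $\to$-classes, and the finiteness of each up-set $\{x\in A':\alpha\to x\}$ (it sits inside any member containing $\alpha$) are all justified, and injectivity of the up-set-size map follows from the strict inclusion of up-sets along the linear order, giving the desired contradiction. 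In the construction, the two facts you lean on --- that a cofinal subfamily of the directed poset $[X_\alpha]^{<\omega}$ is itself directed, hence (being countable) contains a cofinal chain, and that a chain traces at most $3$ of the $4$ subsets of a pair, so a $3$-set containing $x_\alpha$ receives at most $4+3<2^3$ traces --- are both right, and the limit and final steps go through because a shattered finite set would already be shattered at some stage of the increasing union. The only points worth tightening in a written version are bookkeeping: fix the base case $\mathcal F_0=\{\emptyset\}$, note that when $\mathcal F_\alpha$ is finite the ``cofinal chain'' is just a maximum element, and observe that membership of every point in some member (needed for finiteness of the $\to$-classes and of the up-sets) comes from covering singletons. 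Whether this coincides with the route taken in \cite{Low} cannot be checked from the present paper, but as a blind proof of the quoted theorem it stands.
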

This result is motivated by model theory, namely by the existence of so-called honest definitions for dependent local types (see \cite{Dep}, Definition 1.6). This means that sets which are definable with a parameter \emph{external} to a first-order structure by a formula of finite VC-dimension have their finite subsets covered by a definable family (with an \emph{internal} parameter). It is therefore natural to ask for effective results: suppose $k<s<n$ are natural numbers. What is the minimal VC-dimension $D(k,s,n)$ of a family of subsets of~$[n]$, each of size~$s$, which covers every $k$-element subset of $[n]$? 

Finitary questions regarding VC-classes appear across the literature. A celebrated one of these is the following, proved independently by Sauer and Shelah.
\begin{thm}[\cite{Sau},\cite{She}]
\label{SauShe}
    Given $\mathcal{F}\subseteq 2^{[n]}$, if $|\mathcal{F}|>\sum_{i=0}^{k-1}\binom{n}{i}$ then the VC-dimension of $\mathcal{F}$ is at least $k$.
\end{thm}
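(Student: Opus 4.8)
The plan is to prove the contrapositive: if $\mathcal{F}\subseteq 2^{[n]}$ has VC-dimension at most $k-1$, then $|\mathcal{F}|\leq\sum_{i=0}^{k-1}\binom{n}{i}$. I would argue by induction on $n$, proving this statement simultaneously for all $k$. The base case $n=0$ (or $n=1$) is immediate, since $2^{[0]}$ has two elements and $\sum_{i=0}^{k-1}\binom{0}{i}\geq 1$, with the bound sharpening appropriately when $k=0$.

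For the inductive step, split $\mathcal{F}$ by membership of the element $n$. Let $\mathcal{F}_{1}=\{A\setminus\{n\}\mid A\in\mathcal{F}\}\subseteq 2^{[n-1]}$ be the projection, and let $\mathcal{F}_{2}=\{A\subseteq[n-1]\mid A\in\mathcal{F}\text{ and }A\cup\{n\}\in\mathcal{F}\}$. A short double-counting observation gives $|\mathcal{F}|=|\mathcal{F}_{1}|+|\mathcal{F}_{2}|$: the projection $A\mapsto A\setminus\{n\}$ is injective except that it identifies the pair $A\setminus\{n\}$, $A\cup\{n\}$ precisely when both lie in $\mathcal{F}$, i.e.\ on the sets counted by $\mathcal{F}_{2}$.

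Next come the two monotonicity facts. First, any $B\subseteq[n-1]$ shattered by $\mathcal{F}_{1}$ is already shattered by $\mathcal{F}$ (the element $n$ plays no role), so $\VC(\mathcal{F}_{1})\leq k-1$. Second, and more delicately, if $B\subseteq[n-1]$ is shattered by $\mathcal{F}_{2}$ then $B\cup\{n\}$ is shattered by $\mathcal{F}$: for $C\subseteq B$ choose $A\in\mathcal{F}_{2}$ with $A\cap B=C$, and note that $A\cap(B\cup\{n\})=C$ while $(A\cup\{n\})\cap(B\cup\{n\})=C\cup\{n\}$, with both $A$ and $A\cup\{n\}$ in $\mathcal{F}$. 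Hence $\VC(\mathcal{F}_{2})\leq k-2$. Applying the induction hypothesis to $\mathcal{F}_{1}$ with parameter $k$ and to $\mathcal{F}_{2}$ with parameter $k-1$ yields
$$|\mathcal{F}|=|\mathcal{F}_{1}|+|\mathcal{F}_{2}|\leq\sum_{i=0}^{k-1}\binom{n-1}{i}+\sum_{i=0}^{k-2}\binom{n-1}{i}=\sum_{i=0}^{k-1}\binom{n}{i},$$
where the last equality is Pascal's identity $\binom{n-1}{i}+\binom{n-1}{i-1}=\binom{n}{i}$ applied termwise.

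The one step that genuinely needs care is the shattering transfer for the ``doubled'' family $\mathcal{F}_{2}$: it is what forces the index of summation to drop from $k-1$ to $k-2$, and hence what makes Pascal's identity balance the count exactly. An alternative I would keep in reserve is the down-shifting (compression) argument: repeatedly apply the operators that replace $A\in\mathcal{F}$ by $A\setminus\{j\}$ whenever the result is not already in the family, check that these preserve $|\mathcal{F}|$ and do not increase the VC-dimension, and observe that the resulting downward-closed family has every member shattered, hence every member of size at most $k-1$, giving the bound directly.
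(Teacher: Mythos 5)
Your argument is a correct and complete proof of the Sauer--Shelah lemma, but note that the paper does not prove this statement at all: it is quoted as Theorem~\ref{SauShe} with citations to Sauer and Shelah, and the authors use it as a black box in the proof of Theorem~\ref{bound}. What you have written is the standard double-induction proof: the decomposition $|\mathcal{F}|=|\mathcal{F}_1|+|\mathcal{F}_2|$ into the projection and the ``doubled'' subfamily, the two shattering-transfer observations ($\VC(\mathcal{F}_1)\leq\VC(\mathcal{F})$ and, crucially, $\VC(\mathcal{F}_2)\leq\VC(\mathcal{F})-1$ because a set shattered by $\mathcal{F}_2$ extends by the element $n$ to a set shattered by $\mathcal{F}$), and Pascal's identity to close the count; your reserve option via down-shifting is the other classical route and also works. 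Two small points of hygiene: $2^{[0]}$ has one element, not two (your base case still goes through, or you can start at $n=1$ as you suggest), and when you run the induction ``simultaneously for all $k$'' you implicitly need the $k=0$ case, i.e.\ the convention that a family shattering no set (not even the empty set) is empty, so that $\VC(\mathcal{F}_2)\leq k-2$ is meaningful when $k=1$; this is the standard convention and the argument is fine once it is stated.
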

We can see our question above as an analogue of the Sauer-Shelah lemma, replacing the assumption on the size of the family by the covering assumptions. The following is our main result.
\begin{thm}[\ref{main}]
        For all $k\leq s$ and $n\in\mathbb{N}$ with $k^2\binom{s}{k}+k\leq n$ we have $D(k,s,n)=k$.
\end{thm}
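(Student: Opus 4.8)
I would prove the two inequalities $D(k,s,n)\le k$ and $D(k,s,n)\ge k$ separately. The upper bound will hold for all $n\ge s$; only the lower bound uses the hypothesis $k^2\binom sk+k\le n$, and — as the computation below shows — it uses it essentially tightly.

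\emph{Upper bound.} The idea is to take a very rigid family. Fix any $(s-k)$-element subset $P\subseteq[n]$ and set $\mathcal F=\bigl\{S\in\binom{[n]}{s}:P\subseteq S\bigr\}$. Given $A\in\binom{[n]}{k}$ we have $|A\cup P|\le s$, so $A\cup P$ extends to some $S\in\binom{[n]}{s}$; then $P\subseteq S$, so $S\in\mathcal F$, and $A\subseteq S$. Hence $\mathcal F$ covers every $k$-subset. For the VC-dimension, note that every point of $P$ lies in every member of $\mathcal F$, so no shattered set meets $P$; and the traces of $\mathcal F$ on $[n]\setminus P$ are exactly all $k$-subsets of $[n]\setminus P$. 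A $(k+1)$-element set cannot be shattered by the $k$-subsets of a ground set, since the trace equal to the whole set would require a member of size $\ge k+1$. Thus $\VC(\mathcal F)\le k$, i.e. $D(k,s,n)\le k$. (One checks $\VC(\mathcal F)=k$ once $n\ge s+k$, which the hypothesis guarantees, but only the inequality is needed.)

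\emph{Lower bound.} Let $\mathcal F\subseteq\binom{[n]}{s}$ be any family covering $\binom{[n]}{k}$. Each member contains exactly $\binom sk$ of the $k$-subsets and all $\binom nk$ of them must be covered, so $|\mathcal F|\ge\binom nk/\binom sk$. If $\VC(\mathcal F)\le k-1$, then Theorem~\ref{SauShe} in contrapositive form gives $|\mathcal F|\le\sum_{i=0}^{k-1}\binom ni$. It therefore suffices to show that the hypothesis forces
\[
\frac{\binom nk}{\binom sk}>\sum_{i=0}^{k-1}\binom ni .
\]
I would bound the right-hand side by $k\binom n{k-1}$ — it is a sum of $k$ terms, each at most $\binom n{k-1}$ since $n\ge 2k-2$ (a consequence of the hypothesis) makes $i\mapsto\binom ni$ increasing through $i=k-1$ — and rewrite $\binom nk=\tfrac{n-k+1}{k}\binom n{k-1}$. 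The displayed inequality then reduces to $\tfrac{n-k+1}{k\binom sk}>k$, i.e. $n-k+1>k^2\binom sk$, which is exactly $n\ge k^2\binom sk+k$. Hence $\VC(\mathcal F)\ge k$, so $D(k,s,n)\ge k$, and combined with the upper bound $D(k,s,n)=k$.

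The one point to get right is this last estimate: the crude bound $\sum_{i<k}\binom ni\le k\binom n{k-1}$ must be strong enough to absorb the factor $\binom sk$, and it is precisely this that yields the threshold $k^2\binom sk+k$; any slack here would shrink the range of $n$ for which the theorem is proved. The other ingredients — the covering and VC-dimension properties of the family above, and the counting bound $|\mathcal F|\ge\binom nk/\binom sk$ — are routine.
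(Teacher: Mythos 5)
Your proposal is correct, and it splits the theorem exactly as the paper does; the lower bound is in fact the paper's argument verbatim in substance: the covering count $|\mathcal F|\ge\binom nk/\binom sk$, the Sauer--Shelah lemma in contrapositive, the bound $\sum_{i=0}^{k-1}\binom ni\le k\binom n{k-1}$ (valid since $n\ge 2k$), and the same arithmetic reduction showing the threshold $n\ge k^2\binom sk+k$ is precisely what makes $\binom nk/\binom sk>k\binom n{k-1}$. Where you genuinely diverge is the upper bound $D(k,s,n)\le k$. The paper obtains it from a recursive construction: a $(k+1)$-uniform family $\mathcal F_k$ built inductively, shown to be $k$-covering and to have the \emph{unique face property} (hence VC-dimension at most $k$ by Proposition \ref{ufp}), and then lifted to general $s$ by repeated application of the cone operation \ref{cone}. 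You instead take the family of all $s$-sets containing a fixed $(s-k)$-set $P$; your verification (covering because $|A\cup P|\le s$, and VC-dimension at most $k$ because shattered sets must avoid $P$ and the traces off $P$ are exactly $k$-sets) is sound and needs only $n\ge s$, which the hypothesis comfortably implies. In the paper's own language your family is what one gets by applying \ref{cone} $s-k$ times to the full family $\binom{[n-s+k]}{k}$ of Example \ref{k,k}, so it is compatible with their machinery but bypasses the Construction, Proposition \ref{const}, and the unique face property entirely. What each approach buys: yours is shorter and self-contained, and fully suffices for the main theorem; the paper's more elaborate family is presented as an object of independent interest (it realizes the bound for every $n$, shatters an explicit $k$-set when $2k<n$, and feeds into their later questions about stabilization and the values attained below $k$), which is why they take the longer route.
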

In addition to this result, we construct $k$-covering families with VC-dimension smaller than $k$ on smaller sets to demonstrate the requirement for large $n$ in the above theorem. These results utilize particular examples of families as well as canonical constructions of new families with prescribed properties from existing ones. 

Answering analogous questions for infinite cardinals is part of work in progress of Omer Ben-Neria, Itay Kaplan and the first author.
\section*{Acknowledgements}
This work is part of G.P.'s Master's thesis, done in the Hebrew University of Jerusalem. He would like to thank his advisors Omer Ben-Neria and Itay Kaplan for introducing him to this flavor of questions. G.P. is supported by the Israel Science Foundation (Grant 1832/19). J.S. is supported by the Schaerf fund of the Einstein Institute of Mathematics.
\section{Definitions and Examples}\
Given a natural number $n$, write $[n]=\{1,...,n\}$. For a set $A$ and $s\in\mathbb{N}$, we write $\binom{A}{s}=\big\{A_0\subseteq A: |A_0|=s\big\}$.
\begin{defn}
    Let $\mathcal{F}$ be a family of sets.
    \begin{itemize}
        \item Say that a set $A$ is \textit{shattered by $\mathcal{F}$} if $2^A=\left\{A\cap S\mid S\in\mathcal{F}\right\}$.
        \item Define the \textit{VC-dimension} of $\mathcal{F}$ to be the size of the largest finite set shattered by $\mathcal{F}$, or $\infty$ if there is no such maximum. 
    \end{itemize}
\end{defn}
\begin{exs}
    \begin{itemize}
        \item \label{all} Given natural numbers $s\leq n$, the family $\binom{[n]}{s}$ has VC-dimension $\min\{s,n-s\}$; the set $[\min\{s,n-s\}]\subseteq[n]$ is shattered, as any $A\subseteq[\min\{s,n-s\}]$ of size $k$ has $A=[s]\cap (A\cup\{s+1,\dots,2s-k\})$, and any larger set does not have the empty set as the intersection of it with a member of the family by the pigeonhole principle.
        \item The family $\mathcal{F}$ of proper initial segments in a linearly ordered set $X$ with at least two distinct elements $x<y$ has VC-dimension $1$. Indeed, it is at least $1$ since there is $I\in\mathcal{F}$ containing $y$ and there is $J\in\mathcal{F}$ which contains $x$ and not $y$, hence $\{y\}$ is shattered. However, the set $\{x,y\}$ is not shattered -- every $I\in\mathcal{F}$ which contains $y$ contains $x$ as well.
    \end{itemize}
\end{exs}
\begin{defn}
    \begin{itemize}
    \item Let $X$ be a set, $\mathcal{F}\subseteq 2^X$, $k\in\mathbb N$ with $k\leq |X|$. Say that $\mathcal{F}$ has the $\textit{k-covering property}$ if every $A\in\binom{X}{k}$ is contained in a member of $\mathcal{F}$. 
    \item Given $k\leq s\leq n$, write $D(k,s,n)$ for the smallest VC-dimension of a family $\mathcal{F}\subseteq\binom{[n]}{s}$ which has the $k$-covering property. 
    \end{itemize}
\end{defn}
Below are a few simple instances of calculations of this function.
\begin{exs}
    \begin{itemize}
        \item For all $k\leq n$ we have $D(k,n,n)=0$, as witnessed by the trivial family $\{[n]\}$.
        \item For all $k\leq s$ we have $D(k,s,n)\leq\min\{s,n-s\}$ by \ref{all}.
        \item \label{k,k} Since the only $\mathcal{F}\subseteq \binom{[n]}{k}$ with the $k$-covering property is $\binom{[n]}{k}$ itself, we have $D(k,k,n)=\min\{k,n-k\}$ by $\ref{all}$.
    \end{itemize}
\end{exs}
\section{General Bounds and Constructions}
Very few families with covering properties can have VC-dimension $1$. The following is a simple instance of this.

\begin{prop}
    For $2\leq k\leq s$ such that $s<2k$ and $s+k<n$ we have $1<D(k,s,n)$.
\end{prop}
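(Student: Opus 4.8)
The plan is to argue by contradiction. Suppose $\mathcal{F}\subseteq\binom{[n]}{s}$ has the $k$-covering property and VC-dimension at most $1$; equivalently, no two-element subset of $[n]$ is shattered by $\mathcal{F}$. I would first record that, since $k\le n$, every pair $\{x,y\}$ extends to a $k$-element set and hence lies in some member of $\mathcal{F}$, so the ``both'' pattern on $\{x,y\}$ is always realized. Thus, to keep $\{x,y\}$ from being shattered, $\mathcal{F}$ must omit one of the three patterns $\emptyset$, $\{x\}$, $\{y\}$ on it.

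Next I would produce two nearly disjoint members of $\mathcal{F}$. Pick any $S\in\mathcal{F}$; since $n-s>k$ there is a $k$-set $B\subseteq[n]\setminus S$, and the covering property gives $T\in\mathcal{F}$ with $B\subseteq T$. Then $S\cap T\subseteq T\setminus B$, so $m:=|S\cap T|\le s-k$, and $s<2k$ forces $m\le k-1$. Put $P=S\setminus T$ and $Q=T\setminus S$: these are disjoint and each has size $s-m\ge k\ge 2$. The crucial claim is that every $U\in\mathcal{F}$ contains all of $P$ or all of $Q$. Indeed, otherwise pick $x\in P\setminus U$ and $y\in Q\setminus U$; then $U$ realizes $\emptyset$ on $\{x,y\}$, $S$ realizes $\{x\}$, $T$ realizes $\{y\}$, and the member covering $\{x,y\}$ realizes $\{x,y\}$, so $\{x,y\}$ is shattered, a contradiction.

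To finish, I would exhibit a $k$-set that no member of $\mathcal{F}$ can cover, contradicting the $k$-covering property. Take $A$ meeting each of $P$ and $Q$ in at most $k-m-1$ points, the rest of $A$ lying in $[n]\setminus(P\cup Q)$. If $A\subseteq U\in\mathcal{F}$, the claim forces $P\subseteq U$ or $Q\subseteq U$, whence $|U|\ge |A\cup P|\ge k+(s-m)-(k-m-1)=s+1$, which is impossible. It remains to check such an $A$ exists, i.e. that $2(k-m-1)+\bigl|[n]\setminus(P\cup Q)\bigr|\ge k$; since $\bigl|[n]\setminus(P\cup Q)\bigr|=n-2(s-m)$, this simplifies to $n\ge 2s-k+2$, which follows from $n\ge s+k+1$ precisely because $s\le 2k-1$.

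I expect the final counting step — guaranteeing the bad $k$-set $A$ — to be the main point, since it is the only place the hypothesis $s<2k$ is used (the inequality $n\ge 2s-k+2$ is equivalent to $s\le 2k-1$ given $n>s+k$), and one must be a little careful that the caps $k-m-1$ are nonnegative (they are, as $m\le k-1$) and strictly smaller than $|P|=|Q|=s-m$. Everything else is bookkeeping: that $S\ne T$, that the sets involved have the claimed sizes, and that ``VC-dimension at most $1$'' is exactly ``no shattered pair.''
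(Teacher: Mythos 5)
Your proof is correct, but the second half takes a genuinely different route from the paper. Both arguments begin identically: use the covering property and $n>s+k$ to find two members $S,T\in\mathcal{F}$ with $|S\cap T|\le s-k<k$, and aim to shatter a pair $\{x,y\}$ with $x\in S\setminus T$, $y\in T\setminus S$ (the singleton traces come from $S$ and $T$, the full pair from covering). The difference is how the empty trace is produced. The paper does it constructively: it invokes the covering property once more to get a third member containing $S\cap T$ together with a point outside $S\cup T$, and then, since all members have size exactly $s$, this third set must miss a point of each of $S\setminus T$ and $T\setminus S$, yielding the shattered pair directly. You instead argue by contradiction: if no pair is shattered, every $U\in\mathcal{F}$ must contain all of $P=S\setminus T$ or all of $Q=T\setminus S$, and you then build a $k$-set meeting each of $P,Q$ in at most $k-m-1$ points which no $s$-set swallowing $P$ or $Q$ can contain, contradicting covering; the count needs $n\ge 2s-k+2$, which is exactly where $s<2k$ enters. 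The paper's proof is shorter and exhibits the shattered pair explicitly, but it relies on the existence of a point outside $S\cup T$, which is the one delicate existence claim there (the union can be large relative to $n$); your dichotomy-plus-counting argument sidesteps that issue entirely, isolates the role of each hypothesis cleanly, and in fact only needs $n\ge\max\{s+k,\,2s-k+2\}$, so it is a legitimate and in some respects more robust alternative. Do make explicit the symmetric case $Q\subseteq U$ in the final size estimate and the trivial fact that $\mathcal{F}\neq\emptyset$, but these are bookkeeping, as you say.
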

\begin{proof}
    Take $S_1\in\mathcal{F}$. Using the $k$-covering property, pick $k$ points in $[n]\setminus S_1$ and some $S_2\in\mathcal{F}$ which covers them, so $|S_1\cap S_2|\leq s-k < k$. Take some $T\in\mathcal{F}$ which contains $S_1\cap S_2$ and some point outside of $S_1\cup S_2$. Since all members of $\mathcal{F}$ have size $s$, $T$ does not contain $S_1\setminus S_2$ nor $S_1\setminus S_2$, hence there is some $p_1\in S_1\setminus \big(S_2\cup T\big)$ and some $p_2\in S_2\setminus \big(S_1\cup T\big)$. Then $\big\{p_1,p_2\big\}$ is shattered by $S_1,S_2,T$ and any set which contains both $p_1$ and $p_2$.
\end{proof}

\begin{thm}
\label{bound}
    Fix $k\leq s$. Then for sufficiently large $n\in\mathbb{N}$ we have $k\leq D(k,s,n)$. In particular, one can take any $n\geq k^2\binom{s}{k}+k$. 
\end{thm}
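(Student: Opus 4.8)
\textit{Proof proposal.} The plan is to deduce the bound from the Sauer--Shelah lemma (Theorem~\ref{SauShe}) by a double-counting argument after passing to a subset of the right size. Set $N:=k^{2}\binom{s}{k}+k$ and suppose $n\geq N$. Fix any $\mathcal{F}\subseteq\binom{[n]}{s}$ with the $k$-covering property and any $X\subseteq[n]$ with $|X|=N$, and consider the restriction $\mathcal{F}':=\{S\cap X:S\in\mathcal{F}\}\subseteq 2^{X}$. Two easy observations drive everything. First, $\mathcal{F}'$ still has the $k$-covering property as a family of subsets of $X$: any $A\in\binom{X}{k}$ lies in some $S\in\mathcal{F}$, hence in $S\cap X\in\mathcal{F}'$. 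Second, any $A\subseteq X$ shattered by $\mathcal{F}'$ is shattered by $\mathcal{F}$, since $(S\cap X)\cap A=S\cap A$ when $A\subseteq X$; so it suffices to show that $\mathcal{F}'$ has VC-dimension at least $k$. Note also that every member of $\mathcal{F}'$ has size at most $s$.

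Now I would set up the tension between covering and low VC-dimension. On one hand, each $T\in\mathcal{F}'$ contains exactly $\binom{|T|}{k}\leq\binom{s}{k}$ members of $\binom{X}{k}$, and the members of $\mathcal{F}'$ together must contain all $\binom{N}{k}$ of them, so $|\mathcal{F}'|\geq\binom{N}{k}/\binom{s}{k}$ by a union bound. On the other hand, if $\mathcal{F}'$ had VC-dimension less than $k$, then (identifying $X$ with $[N]$ and applying the contrapositive of Theorem~\ref{SauShe}) we would get $|\mathcal{F}'|\leq\sum_{i=0}^{k-1}\binom{N}{i}$. Combining these, and using $\sum_{i=0}^{k-1}\binom{N}{i}\leq k\binom{N}{k-1}$ --- valid because $N\geq 2(k-1)$, so $\binom{N}{i}\leq\binom{N}{k-1}$ for every $i\leq k-1$ --- yields $\binom{N}{k}\leq k\binom{s}{k}\binom{N}{k-1}$. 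Since $\binom{N}{k}=\binom{N}{k-1}\cdot\frac{N-k+1}{k}$ and $\binom{N}{k-1}>0$, this simplifies to $N-k+1\leq k^{2}\binom{s}{k}$, i.e.\ $N\leq k^{2}\binom{s}{k}+k-1<N$, a contradiction. Hence $\mathcal{F}'$, and therefore $\mathcal{F}$, has VC-dimension at least $k$, which is exactly the claim.

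The argument is short, so the real work is in the bookkeeping: one has to choose $N$ just large enough that the covering lower bound on $|\mathcal{F}'|$ genuinely beats the Sauer--Shelah upper bound, and the estimate $\sum_{i<k}\binom{N}{i}\leq k\binom{N}{k-1}$ must be tight enough to make the threshold land at exactly $N=k^{2}\binom{s}{k}+k$ rather than at a weaker (larger) value --- so that step is where I expect the care to be needed, together with a sanity check that $\binom{N}{i}\leq\binom{N}{k-1}$ is never invoked outside its range of validity (which holds here since $N\geq k^{2}+k$) and that the degenerate case $k=s$, where $\binom{s}{k}=1$ forces $\mathcal{F}=\binom{[n]}{s}$, causes no trouble. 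An alternative route would be to build a shattered $k$-set greedily, adding one point at a time while maintaining $2^{i}$ witnesses realizing all traces on the current $i$-set; but controlling how many points get "forbidden" at each step looks fiddlier and does not obviously reproduce the clean constant $k^{2}\binom{s}{k}+k$, so I would favor the double-counting argument above.
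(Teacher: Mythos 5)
Your proposal is correct and is essentially the paper's own argument: lower-bound the size of a $k$-covering family by double counting ($|\mathcal{F}|\geq\binom{n}{k}/\binom{s}{k}$ since each $s$-set covers at most $\binom{s}{k}$ of the $\binom{n}{k}$ many $k$-sets), then invoke the Sauer--Shelah lemma via $\sum_{i<k}\binom{n}{i}\leq k\binom{n}{k-1}$ and the arithmetic identity $\binom{n}{k}/\binom{n}{k-1}=(n-k+1)/k$, which is exactly where the threshold $k^{2}\binom{s}{k}+k$ comes from. The only cosmetic difference is your preliminary restriction to a subset $X$ of size exactly $N$; the paper runs the same computation directly on $[n]$, since $(n-k+1)/k$ only grows with $n$, so that step is harmless but not needed.
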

\begin{proof}
    A short calculation shows that this is equivalent to $k\binom{n}{k-1}<\frac{\binom{n}{k}}{\binom{s}{k}}$.
    Now since $2k\leq n$, this gives us $\sum_{i=0}^{k-1}\binom{n}{i}<\frac{\binom{n}{k}}{\binom{s}{k}}$.
    Let $\mathcal{F}$ be a $k$-covering family of subsets of~$[n]$ of size $s$, then any $S\in\mathcal{F}$ can cover at most $\binom{s}{k}$ distinct subsets of~$[n]$ of size $k$. Since there are $\binom{n}{k}$ such subsets, the cardinality of $\mathcal{F}$ is at least $\frac{\binom{n}{k}}{\binom{s}{k}}$, hence by \ref{SauShe} we have $\VC(\mathcal{F})\geq k$.
\end{proof}

Our original approach to this bound was based on proving a finitary version of Proposition 3.8 of \cite{Low}, however it gave significantly worse estimates for~$n$ than the proof above.

The following two Propositions demonstrate the flexibility of pushing up~$s$ and~$n$, while keeping the VC-dimension and the $k$-covering property.
\begin{prop}
    Suppose $\mathcal{F}\subseteq\binom{[n]}{s}$ has VC-dimension $m$ and the $k$-covering property.
    \begin{enumerate}[ref={\theprop.\arabic*}]
        \item \label{cone} There is a family $\mathcal{F}_*\subseteq\binom{[n+1]}{s+1}$ of VC-dimension $m$ with the $k$-covering property (in fact, this family shatters the same sets as $\mathcal{F})$. In particular, for every $k\leq s\leq n$ we have $D(k,s+1,n+1)\leq D(k,s,n)$. 
        \item \label{box} For every natural number $\ell$, there is a family $\mathcal{F}\times \ell\subseteq\binom{[n\cdot \ell]}{s\cdot \ell}$ of VC-dimension $m$ with the $k$-covering property. In particular, for every $k\leq s\leq n$ we have $D(k,s\cdot \ell,n\cdot \ell)\leq D(k,s,n)$.
    \end{enumerate}
\end{prop}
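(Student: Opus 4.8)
The plan is to build both families explicitly from $\mathcal{F}$ and then verify, by direct combinatorial arguments, that each construction preserves the VC-dimension and the $k$-covering property; the bounds on $D$ then follow by applying the constructions to a family realizing $D(k,s,n)$.

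For \ref{cone} I would take $\mathcal{F}_* = \big\{S\cup\{n+1\} : S\in\mathcal{F}\big\}$. Each member has size $s+1$ and lies in $[n+1]$, and $S\mapsto S\cup\{n+1\}$ is a bijection onto $\mathcal{F}_*$. For the $k$-covering property, let $A\in\binom{[n+1]}{k}$: if $n+1\notin A$ then any $S\in\mathcal{F}$ covering $A$ gives $A\subseteq S\cup\{n+1\}$; if $n+1\in A$ then $A\setminus\{n+1\}$ has size $k-1$, and since $\mathcal{F}$ is $k$-covering on a ground set of size $n\geq k$ it is also $(k-1)$-covering (extend a $(k-1)$-subset to a $k$-subset), so some $S\in\mathcal{F}$ covers $A\setminus\{n+1\}$ and hence $S\cup\{n+1\}\supseteq A$. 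For the VC-dimension the key observation is that no set shattered by $\mathcal{F}$ or by $\mathcal{F}_*$ can contain the point $n+1$: a shattered set $B$ satisfies $B=B\cap T$ for some member $T$, so for $\mathcal{F}$ we get $T\subseteq[n]$, and for $\mathcal{F}_*$ a set containing $n+1$ could never be met in $\emptyset$, since every member of $\mathcal{F}_*$ contains $n+1$. Once $n+1\notin B$ we have $B\cap(S\cup\{n+1\})=B\cap S$ for every $S\in\mathcal{F}$, so $B$ is shattered by $\mathcal{F}_*$ if and only if it is shattered by $\mathcal{F}$; in particular the families shatter the same sets and $\VC(\mathcal{F}_*)=m$.

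For \ref{box} I would identify $[n\cdot\ell]$ with $[n]\times[\ell]$ and set $\mathcal{F}\times\ell=\big\{S\times[\ell]:S\in\mathcal{F}\big\}$, where $S\times[\ell]=\{(x,j):x\in S\}$ has size $s\ell$. The $k$-covering property follows by projecting to the first coordinate: given $A\in\binom{[n]\times[\ell]}{k}$, its projection $\pi(A)$ has size at most $k$, so (again using that $k$-covering implies $j$-covering for all $j\leq k$) it is covered by some $S\in\mathcal{F}$, whence $A\subseteq\pi(A)\times[\ell]\subseteq S\times[\ell]$. For the VC-dimension I would prove both inequalities. The bound $\VC(\mathcal{F}\times\ell)\geq m$ is immediate: if $B\subseteq[n]$ is shattered by $\mathcal{F}$, then $B\times\{1\}$ is shattered by $\mathcal{F}\times\ell$ because $(B\times\{1\})\cap(S\times[\ell])=(B\cap S)\times\{1\}$. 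The upper bound is the step I expect to need the most care: the point is that any $B$ shattered by $\mathcal{F}\times\ell$ projects injectively under $\pi$, since two elements $(x,j_1),(x,j_2)\in B$ with $j_1\neq j_2$ lie in exactly the same members of $\mathcal{F}\times\ell$ and so can never be separated. Given injectivity of $\pi$ on $B$, one checks that $\pi$ carries the trace $B\cap(S\times[\ell])$ onto $\pi(B)\cap S$, so $\pi(B)$ is shattered by $\mathcal{F}$ and $|B|=|\pi(B)|\leq m$; hence $\VC(\mathcal{F}\times\ell)=m$.

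Finally, both ``in particular'' statements are obtained by applying the relevant construction to a family $\mathcal{F}\subseteq\binom{[n]}{s}$ with the $k$-covering property and $\VC(\mathcal{F})=D(k,s,n)$, noting that $s\leq n$ yields $s+1\leq n+1$ and $s\ell\leq n\ell$, and $k\leq s$ yields $k\leq s+1$ and $k\leq s\ell$, so the resulting families are admissible competitors for $D(k,s+1,n+1)$ and $D(k,s\ell,n\ell)$ respectively.
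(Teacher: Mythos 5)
Your proposal is correct and follows essentially the same route as the paper: the same two constructions ($\mathcal{F}_*=\{S\cup\{n+1\}\}$ and $\mathcal{F}\times\ell=\{S\times[\ell]\}$), with the covering property checked by padding/projecting and the VC-dimension controlled by noting that $n+1$ cannot lie in a shattered set and that points sharing a first coordinate cannot be separated. Your phrasing of the upper bound in \ref{box} via injectivity of the projection on shattered sets is just the contrapositive of the paper's argument, so there is nothing substantively different to flag.
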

\begin{proof}
    \begin{enumerate}
        \item Define $\mathcal{F}_*$ to be the family $\{A\cup\{n+1\}\mid A\in\mathcal{F}\}\subseteq\binom{[n+1]}{s+1}$. We will claim that it has the $k$-covering property. Indeed, take $A\subseteq [n+1]$ of size $k$. If it does not contain $n+1$, take some $B\in\mathcal{F}$ containing it and cover it with $B\cup\{n+1\}$. Otherwise, take some $x\in[n]\setminus A$ and some $B\in\mathcal{F}$ containing $A\cup\{x\}\setminus\{n+1\}$, then $A\cup\{n+1\}$ contains $A$. We will now claim that $\VC\left(\mathcal{F}_*\right)\leq \VC(\mathcal{F})$. Suppose $A\subseteq [n+1]$ is shattered by $\mathcal{F}_*$. If $A\subseteq[n]$ then necessarily $A$ is shattered by $\mathcal{F}$ since for every $S\in\mathcal{F}$ we have $S\cap A= S\cup\{n+1\}\cap A$. Otherwise we have $n+1\in A$, which implies that every $S\in\mathcal{F}_*$ has $n+1\in S\cap A$, a contradiction to $A$ being shattered by $\mathcal{F}_*$. We will now show that every set shattered by $\mathcal{F}$ is shattered by $\mathcal{F}_*$ as well. Indeed, suppose $A\subseteq[n+1]$ is shattered by $\mathcal{F}$, so necessarily $A\subseteq[n]$, hence for every $S\in\mathcal{F}$ we have $S\cup\{n+1\}\cap A=S\cap A$, so $A$ is shattered by $\mathcal{F}_*$ as well.
        \item Define $\mathcal{F}\times \ell\subseteq\binom{[n]\times[\ell]}{s\cdot \ell}$ as follows:
            \begin{align*}
            \mathcal{F}\times \ell=\{S\times[\ell]\mid S\in\mathcal{F}\}
            \end{align*}
         We will first claim that $\mathcal{F}\times \ell$ has the $k$-covering property. Fix $p_i=\left(v_i,x\right)\in[n]\times[\ell]$ for $1\leq i\leq n$. Take some $S\in\mathcal{F}$ which covers $\big\{v_i\mid 1\leq i\leq k\big\}$, so $S\times[\ell]$ covers $\big\{p_1,\dots,p_k\big\}$. We will now claim that $\VC(\mathcal{F}\times \ell)\leq m=\VC(\mathcal{F})$. Fix distinct points $p_i=\left(v_i,x_i\right)\in[n]\times[\ell]$ for $1\leq i\leq m+1$. If there are $1\leq i_1<i_2\leq m+1$ with $v_{i_1}=v_{i_2}$, then for every $S\in\mathcal{F}$ with $p_{i_1}\in S\times[\ell]$ we have $p_{i_2}\in S\times[\ell]$, hence $\mathcal{F}\times \ell$ does not shatter $\big\{p_i\mid 1\leq i\leq m+1\big\}$, so assume all $v_i$'s are distinct. By the definition of $v$, there is some $A\subseteq \big\{v_i\mid 1\leq i\leq m+1\big\}$ with $A\neq S\cap\big\{v_i\mid 1\leq i\leq m+1\big\}$ for all $S\in\mathcal{F}$, so $\big\{p_i\mid v_i\in A\big\}\neq (S\times [\ell])\cap\big\{p_i\mid 1\leq i\leq m+1\}$ for all $S\in\mathcal{F}$. To get equality in VC-dimension, note that for every $A\subseteq[n]$ and $S\in\mathcal{F}$, we have $(A\cap S)\times\{1\}=A\times\{1\}\cap S\times[\ell]$.
    \end{enumerate}
\end{proof}
\section{Specific Families}
The following is a family of examples that shows that for small enough $n$, we can expect low VC-dimension.
\begin{prop}
    For all $k,m\in\mathbb{N}$ we have the following bound: $$D(k, k^m,(k+1)^m)\leq m\log_2 (k+1)$$
\end{prop}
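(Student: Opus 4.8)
The plan is to realise the ground set $[(k+1)^m]$ as the $m$-fold grid $[k+1]^m$ and to take
\[
\mathcal{F}=\big\{\,S_f : f\in[k+1]^m\,\big\},\qquad S_f=\prod_{i=1}^m\big([k+1]\setminus\{f(i)\}\big),
\]
that is, the family of all subgrids obtained by forbidding one value in each coordinate. First I would record the easy bookkeeping: each $S_f$ is a product of $m$ sets of size $k$, so $|S_f|=k^m$ and hence $\mathcal{F}\subseteq\binom{[(k+1)^m]}{k^m}$; moreover $f\mapsto S_f$ maps $[k+1]^m$ onto $\mathcal{F}$, so $|\mathcal{F}|\le(k+1)^m$, and this cardinality bound is the only quantitative input the argument needs.

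Next I would check that $\mathcal{F}$ has the $k$-covering property. Let $p_1,\dots,p_k\in[k+1]^m$. Fix a coordinate $i\in[m]$: the values $(p_1)_i,\dots,(p_k)_i$ form a subset of $[k+1]$ of size at most $k$, so by pigeonhole there is some $f(i)\in[k+1]$ avoided by all of them. Carrying this out in every coordinate yields $f\in[k+1]^m$ with $(p_j)_i\neq f(i)$ for all $i\in[m]$ and all $j$, i.e.\ $p_1,\dots,p_k\in S_f$. Thus every $k$-element subset of the ground set lies in a member of $\mathcal{F}$.

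For the VC-dimension I would invoke only the trivial estimate $\VC(\mathcal{G})\le\log_2|\mathcal{G}|$: if $A$ is shattered by $\mathcal{F}$ then $S\mapsto A\cap S$ must hit all $2^{|A|}$ subsets of $A$, so $2^{|A|}\le|\mathcal{F}|\le(k+1)^m$, giving $|A|\le m\log_2(k+1)$. Hence $\VC(\mathcal{F})\le m\log_2(k+1)$, which is exactly the asserted bound on $D(k,k^m,(k+1)^m)$.

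There is no real obstacle once the family is chosen; the single idea is the construction itself, namely exhibiting an $\mathcal{F}\subseteq\binom{[(k+1)^m]}{k^m}$ that is $k$-covering yet has only $(k+1)^m$ members, so that the crude inequality $\VC\le\log_2|\mathcal{F}|$ already drops below $k$ once $m$ is small compared to $k/\log_2(k+1)$. The grid family is the natural candidate; equivalently one may build $\mathcal{F}$ recursively from the case $m=1$, where $\mathcal{F}=\binom{[k+1]}{k}$, by taking a product with a fresh copy of $\binom{[k+1]}{k}$ at each step. The only mildly delicate points are purely verificational: that the products $S_f$ all have size exactly $k^m$, and that the coordinatewise pigeonhole choices are made independently in each coordinate.
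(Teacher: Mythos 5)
Your construction is exactly the paper's: the family of width-$k$ hypercubes in $[k+1]^m$ (equivalently, products of $k$-element subsets of $\mathbb{Z}/(k+1)$), with the $k$-covering property obtained by the same coordinatewise pigeonhole and the VC bound by the same cardinality estimate $2^{|A|}\leq|\mathcal{F}|\leq(k+1)^m$. The argument is correct and essentially identical to the one in the paper.
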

\begin{proof}
    Consider the abelian group $A=\big(\mathbb{Z}/(k+1)\big)^m$. Define the family
    $$\mathcal{F}=\left\{\prod_{i=1}^m X_i\mid X_i\in \binom{\mathbb{Z}/(k+1)}{k}\right\}\subseteq \binom{A}{k^m},$$
     where the members $\mathcal{F}$ are $m$-hypercubes of width $k$ in $A$. For every $1\leq i\leq m$ and $p\in A$ define $\pi_i(p)$ to be the $i$'th coordinate of $p~$. We will claim that $\mathcal{F}$ has the $k$-covering property. Indeed, given points $p_1,\dots,p_k\in A$, for every $1\leq i\leq m$, there is at least one $1\leq t_i\leq k+1$ such that $\pi_i(p_j)\neq t_i$ for all $1\leq j\leq k$. This means that $p_1,...,p_k$ are all contained in the following hypercube: 
    $$\prod_{i=1}^{m}\big(\big(\mathbb{Z}/(k+1)\big)\setminus\big\{t_i\big\}\big)$$
    
    This gives us the $k$-covering property. As for the VC-dimension, note that to shatter a set of size $t$, the family must be of cardinality at least $2^t$. Since $\mathcal{F}$ contains $|A|=(k+1)^m$ many elements, the result follows.
\end{proof}

	\begin{figure}[H]
		\centering
		\includegraphics[width=0.3\linewidth]{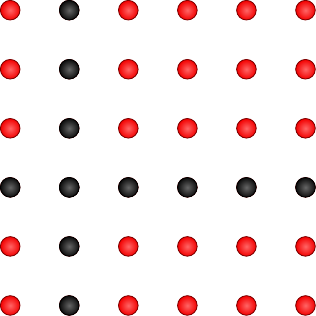}
		\caption{\small A member of $\mathcal{F}$ for $k=5$. }
		\label{fig:box}
	\end{figure}

The following demonstrates that we can achieve the bound in \ref{bound} for all $n$.
\begin{defn}
    Fix $s\leq n$ and a family of sets $\mathcal{F}\subseteq\binom{[n]}{s}$. Say that $\mathcal{F}$ has the \emph{unique face property} if for every $S\in\mathcal{F}$ there is $K\subsetneq S$ such that the only member of $\mathcal{F}$ containing $K$ is $S$.
\end{defn}
\begin{prop}
\label{ufp}
    If $\mathcal{F}\subseteq\binom{[n]}{s}$ has the unique face property, then it has VC-dimension less than $s$.
\end{prop}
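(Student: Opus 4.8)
The plan is to argue by contradiction: suppose some $A\subseteq[n]$ with $|A|=s$ is shattered by $\mathcal{F}$, and derive a contradiction from the unique face property. Since VC-dimension less than $s$ is exactly the assertion that no set of size $s$ is shattered (a shattered set of size $>s$ would in particular contain a shattered set of size $s$), this suffices.

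The first key step is to observe that a shattered set of size $s$ must itself be a member of $\mathcal{F}$. Indeed, shattering $A$ means every $B\subseteq A$ equals $S\cap A$ for some $S\in\mathcal{F}$; applying this to $B=A$ yields $S\in\mathcal{F}$ with $A\subseteq S$, and since $|S|=s=|A|$ we get $S=A$, so $A\in\mathcal{F}$.

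Now I would invoke the unique face property for the member $A\in\mathcal{F}$: there is $K\subsetneq A$ such that $A$ is the only element of $\mathcal{F}$ containing $K$. But because $A$ is shattered, there is some $S'\in\mathcal{F}$ with $S'\cap A=K$; in particular $K\subseteq S'$, so by the uniqueness clause $S'=A$. Then $K=S'\cap A=A\cap A=A$, contradicting $K\subsetneq A$. This completes the argument.

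I do not expect a genuine obstacle here; the whole proof rests on the single observation that a shattered $s$-set must lie in $\mathcal{F}$, after which the unique face property is directly incompatible with realizing the proper subset $K$ as a trace on $A$. The only point to state carefully is the reduction from "VC-dimension $<s$" to "no $s$-set is shattered," which is immediate from the definition of shattering.
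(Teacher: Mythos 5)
Your proof is correct and follows essentially the same route as the paper: the key observation that an $s$-set realized as a full trace must itself belong to $\mathcal{F}$, followed by the incompatibility of the unique face $K\subsetneq A$ with being realized as a trace on $A$. The paper merely phrases this as a two-case argument (for $A\notin\mathcal{F}$ and $A\in\mathcal{F}$) rather than a contradiction, but the content is identical.
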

\begin{proof}
    Fix $A\in\binom{[n]}{s}$. If $A\notin\mathcal{F}$ then $A$ itself is not of the form $A\cap S$ for any $S\in\mathcal{F}$. Otherwise, by the unique face property we have some $K\subseteq A$ such that the only member of $\mathcal{F}$ containing it is $A$. In particular, $K$ cannot be separated from $A\setminus K$ by $\mathcal{F}$.
\end{proof}
\begin{con}
    Given $m\in\mathbb{N}$ and $k=1$, consider the following family: \[\mathcal{F}_1=\{\{2t-1,2t\}\mid t\leq\frac{m}{2}\}\cup\{\{m-1,m\}\}\subseteq\binom{[m]}{2}\]
    Assuming we have defined $\mathcal{F}_k\subseteq\binom{[m+k-1]}{k}$, define the family $\mathcal{F}_{k+1}\subseteq\binom{[m+k]}{k+1}$ as follows:
    \begin{align*}
        \mathcal{F}_{k+1}=\bigcup_{i=1}^{m+k}\{S\cup\{i\}\mid S\in\mathcal{F}\cap 2^{[i-1]}\}
    \end{align*}
\end{con}

	\begin{figure}[H]
		\centering
		\includegraphics[width=0.5\linewidth]{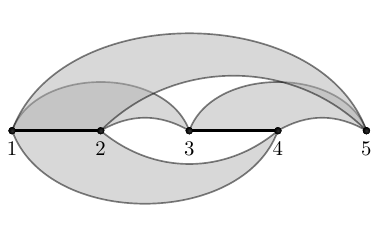}
		\caption{\small $\mathcal{F}_2$ when starting with $m=4$. }
		\label{fig:con}
	\end{figure}
	
\begin{prop}
\label{const}
    For all $m,k\in\mathbb{N}$, the family $\mathcal{F}_k\subseteq\binom{[n]}{k+1}$ (for $n=m+k-1$) satisfies the following properties:
    \begin{enumerate}
        \item It has the $k$-covering property.
        \item It has the unique face property.
        \item \label{3} For every ordered $\{t_1,\dots,t_{k+1}\}\in\mathcal{F}_k$ and $\hat{t}\in[n]$ with $t_k<\hat{t}<t_{k+1}$, we have $\{t_1,\dots,t_k,\hat{t}\}\in\mathcal{F}_k$ as well.
        \item If\, $2k<n$ then $\big\{n-k+1,\dots,n\big\}$ is shattered by $\mathcal{F}_k$.
    \end{enumerate}
\end{prop}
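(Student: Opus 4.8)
The plan is to establish all four items simultaneously by induction on $k$, leaning on the structural description of $\mathcal{F}_{k+1}$: every member of $\mathcal{F}_{k+1}$ has the form $S\cup\{i\}$ with $S\in\mathcal{F}_k$, $\max S<i$ and $i\le m+k$, and in this situation $i=\max(S\cup\{i\})$, so both the ``apex'' $i$ and the ``base'' $S$ can be read off from the member. The base case $k=1$ should be quick: $\mathcal{F}_1$ consists of the length-two intervals $\{j,j+1\}$ together with $\{m-1,m\}$, so (3) is vacuous; (1) holds since $\{j,j+1\}$ covers $\{j\}$ for $j<m$ and $\{m-1,m\}$ covers $\{m\}$; (4) holds since $\emptyset=\{1,2\}\cap\{m\}$ and $\{m\}=\{m-1,m\}\cap\{m\}$ once $m>2$; and (2) follows after a short split on the parity of $m$ --- for even $m$ the intervals are disjoint, and for odd $m$ the element $m-1$ is the only point in two intervals, each of which still carries a point private to it. I would also record once and for all the auxiliary fact that $\{1,\dots,k+1\}\in\mathcal{F}_k$ for every $k$ (trivial induction: $\{1,2\}\in\mathcal{F}_1$, and coning $\{1,\dots,k+1\}\in\mathcal{F}_k$ with apex $k+2$ gives $\{1,\dots,k+2\}\in\mathcal{F}_{k+1}$), since this is needed for (4).

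For the inductive step, assume (1)--(4) for $\mathcal{F}_k$. Item (3) for $\mathcal{F}_{k+1}$ is almost immediate: an ordered member $\{t_1<\dots<t_{k+2}\}$ decomposes as $S\cup\{t_{k+2}\}$ with $S=\{t_1,\dots,t_{k+1}\}\in\mathcal{F}_k$, and if $t_{k+1}<\hat t<t_{k+2}$ then $\max S<\hat t\le m+k$, so $S\cup\{\hat t\}\in\mathcal{F}_{k+1}$. Item (2) transfers by appending the apex to a witnessing face: given $T=S\cup\{i\}$, take $K\subsetneq S$ with $S$ the unique member of $\mathcal{F}_k$ containing $K$, and check that $K\cup\{i\}$ works for $T$ --- any member $S'\cup\{i'\}$ containing $K\cup\{i\}$ has $i\le i'$, hence $K\subseteq S'$ (all of $K$ lies below $i\le i'$), hence $S'=S$ by uniqueness, hence $i=i'$ since $i\notin S$, so $S'\cup\{i'\}=T$.

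Item (1) is the heart. Given a $(k+1)$-set $A=\{a_1<\dots<a_{k+1}\}\subseteq[m+k]$ (so $a_k\le m+k-1$), use the $k$-covering property of $\mathcal{F}_k$ to get $S=\{a_1,\dots,a_k\}\cup\{x\}\in\mathcal{F}_k$. If $x<a_{k+1}$ then $\max S<a_{k+1}\le m+k$, so $S\cup\{a_{k+1}\}\in\mathcal{F}_{k+1}$ contains $A$. If $x=a_{k+1}$ then $A=S\in\mathcal{F}_k$, whence $a_{k+1}\le m+k-1$ and $A\cup\{m+k\}\in\mathcal{F}_{k+1}$ contains $A$. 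If $x>a_{k+1}$ --- this is exactly where item (3) for $\mathcal{F}_k$ enters --- sliding the apex $x$ of $S$ down to $a_{k+1}$ (legitimate since $a_k<a_{k+1}<x$) shows $A\in\mathcal{F}_k$, and again $A\cup\{m+k\}$ works (note $a_{k+1}<x\le m+k-1$). For item (4), assuming $m\ge k+3$ (equivalently $2(k+1)<m+k$), induction gives that $\mathcal{F}_k$ shatters $\{m,\dots,m+k-1\}$, and I would realize an arbitrary $B\subseteq\{m,\dots,m+k\}$ as follows: if $m+k\in B$, cone a member of $\mathcal{F}_k$ realizing $B\setminus\{m+k\}$ with apex $m+k$; if $B=\emptyset$, use the auxiliary member $\{1,\dots,k+2\}\in\mathcal{F}_{k+1}$, which is disjoint from $\{m,\dots,m+k\}$ because $k+2<m$; and if $\emptyset\ne B\subseteq\{m,\dots,m+k-1\}$ with $\max B=m+j$, observe that a member of $\mathcal{F}_k$ realizing $B\setminus\{m+j\}$ must avoid $m+j,\dots,m+k-1$, hence lies in $[m+j-1]$, hence may be coned with apex $m+j$ to realize $B$.

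I expect the main obstacle to be the covering step (1), and within it the realization that item (3) for $\mathcal{F}_k$ is precisely the tool needed to repair an ``overshooting'' cover: if the base $S$ supplied by the $k$-covering property has its extra point $x$ above $a_{k+1}$, no choice of apex makes $S\cup\{i\}$ contain $A$, so one genuinely must first move $x$ down. The other fiddly point is the bookkeeping that the apex one wishes to append never exceeds $m+k$ --- this is why the cases $x=a_{k+1}$ and $x>a_{k+1}$ are cleanly handled by coning with $m+k$ (using $A\in\mathcal{F}_k\subseteq 2^{[m+k-1]}$), and why the empty-intersection case of (4) is split off and handled via the explicit member $\{1,\dots,k+2\}$ rather than through the shattering hypothesis.
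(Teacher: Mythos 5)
Your proof is correct and follows essentially the same inductive route as the paper: item (3) read off from the apex decomposition, item (2) by appending the apex to a unique face of the base, item (1) by the same three-way case split on the extra point of the covering member (using item (3) to slide it down when it overshoots), and item (4) by coning a realizing member at $\max B$ and handling $\emptyset$ via $\{1,\dots,k+2\}$. If anything, your write-up is slightly more careful than the paper's at the base case of the unique face property and in verifying the member $\{1,\dots,k+2\}\in\mathcal{F}_{k+1}$ explicitly.
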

\begin{proof}
    We will prove the result by induction on $k$. For $k=1$ the first two are trivial, the third is void, and the fourth follows from $\{1,2\}\in\mathcal{F}_1$ and $\{n-1,n\}\in\mathcal{F}_1$. 
    
    Assume now that these three properties hold for $\mathcal{F}_k$. Pick an ordered $K=\big\{t_1,\dots,t_{k+1}\big\}\subseteq[n+1]^{k+1}$ and let $S\in\mathcal{F}_k$ cover $K\setminus\big\{t_{k+1}\big\}$. If $S$ covers $K$ then so does $S\cup\{n+1\}\in\mathcal{F}_{k+1}$. If $S$ does not cover $K$ and $\max{S}<t_{k+1}$ then $S\cup\big\{t_{k+1}\big\}$. Otherwise, there is some $r>0$ with $t_{k+1}+r=\max{S}$ (since $|S|=k+1)$. Applying~\ref{3} to $S$ and $\hat{t}=t_{k+1}$ gives us $S'\in\mathcal{F}_k$ with $K\subseteq S$, reducing to the first case. 
    
    For the unique face property, fix $S'\in\mathcal{F}_{k+1}$, so some $S\in\mathcal{F}_k$ has $S'=S\cup\{\max S'\}$. For $K_0$ a unique face of $S$, we will claim that $K=K_0\cup\{\max S'\}$ is a unique face of $S$. Suppose $T'\in\mathcal{F}_{k+1}$ has $K\subseteq T'$, so some $T\in\mathcal{F}_k$ has $T'=T\cup\{\max T'\}$. If $K\subseteq T$ then $K_0\subseteq T$, giving us $S=T$ and $\max T'=\max S'$, hence $S=T$ and $S'=T'$. Otherwise we have $\max T'\in K$, hence $\max T'=\max S'$ and so $S'=T'$ nonetheless. 
    
    Suppose $S'=\big\{s_1,...,s_{k+2}\big\}\in\mathcal{F}_{k+1}$ is ordered, so $S=S'\setminus\big\{s_{k+2}\big\}\in\mathcal{F}_k$. Fix $s_{k+1}<\hat{t}<s_{k+2}$. By definition of $\mathcal{F}_{k+1}$, since $\max S=s_{k+1}<\hat{t}$ we get $S\cup\{\hat{t}\}$ as well. 

    Assume that $2k+1<n$ and the claim holds up to $k$. By the induction hypothesis, $\{n-k+1,\dots,n\}$ is shattered by $\mathcal{F}_k$. To separate the empty set, note that $\{1,\dots,k+1\}\in\mathcal{F}_{k+1}$ and $k+1<n-k+1$. For $A\subseteq\{n-k+1,\dots,n+1\}$ nonempty, let $a\in A$ be maximal. By the induction hypothesis, there is some $S\in\mathcal{F}_k$ such that $S\cap \{n-k+1,\dots,n\}=A\setminus\{a\}$. In particular we have $S\subseteq 2^{[a-1]}$, so by definition we have $S\cup\{a\}\in\mathcal{F}_{k+1}$, hence $A=(S\cup\{a\})\cap\{n-k+1,\dots,n+1\}$.
\end{proof}
\begin{cor}
\label{upper}
    For all $k\leq s\leq n$ we have $D(k,s,n)\leq k$. 
\end{cor}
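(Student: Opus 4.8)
The plan is to assemble the corollary from the explicit family $\mathcal{F}_k$ of Construction~4.7 together with the monotonicity of $D$ provided by Proposition~\ref{cone}, handling the edge case $s=k$ by hand.

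First I would record the base case $s=k+1$. By Proposition~\ref{const}, for every $m\geq 2$ the family $\mathcal{F}_k\subseteq\binom{[m+k-1]}{k+1}$ has both the $k$-covering property and the unique face property; by Proposition~\ref{ufp} the latter forces $\VC(\mathcal{F}_k)<k+1$, that is, $\VC(\mathcal{F}_k)\leq k$. Since $m+k-1$ runs over all integers $\geq k+1$ as $m$ runs over $\{2,3,\dots\}$, this already yields $D(k,k+1,N)\leq k$ for every $N\geq k+1$.

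Next I would dispose of the case $s=k$: here Example~\ref{k,k} gives $D(k,k,n)=\min\{k,n-k\}\leq k$ directly. For $s\geq k+1$ I would bootstrap up from the base case using Proposition~\ref{cone}. Put $N_0:=n-s+k+1$; the hypothesis $s\leq n$ guarantees $N_0\geq k+1$, so the base case applies and $D(k,k+1,N_0)\leq k$. Now apply Proposition~\ref{cone} exactly $s-k-1$ times, each application raising the set size and the ground-set size by one while not increasing the VC-dimension bound; after these applications one reaches $D(k,s,\,N_0+(s-k-1))=D(k,s,n)\leq k$, as required (when $s=k+1$ this is an empty chain of steps).

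I do not anticipate a genuine obstacle: Proposition~\ref{const} already carries out all the combinatorial work, and what remains is bookkeeping — choosing the starting ground set $N_0$ so that the chain of $\ref{cone}$-steps lands exactly on $n$, which is precisely where $s\leq n$ enters, and recalling that the $\ref{cone}$-monotonicity cannot be run in reverse to reach $s=k$, which is why that case is treated separately.
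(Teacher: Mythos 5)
Your proposal is correct and follows essentially the same route as the paper: the case $s=k$ via Example~\ref{k,k}, the case $s=k+1$ via Proposition~\ref{const} combined with Proposition~\ref{ufp}, and the general case by applying Proposition~\ref{cone} exactly $s-k-1$ times starting from the ground set of size $n-(s-k-1)$. Your choice $N_0=n-s+k+1$ is precisely the paper's starting point, so the arguments coincide.
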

\begin{proof}
    For all $k\leq n$ have $D(k,k,n)\leq k$ by~\ref{k,k}. For $s=k+1$ we get the result by~\ref{const} and~\ref{ufp}. For general $s$, apply~\ref{cone} $s-k-1$ many times to the family $\mathcal{F}_k\subseteq\binom{[n-(s-k-1)]}{k+1}$.
\end{proof}
Joining \ref{upper} with \ref{bound} gives us our main theorem:
\begin{thm}
\label{main}
    For all $k\leq s$ and $n\geq k^2\binom{k}{s}+k$ we have $D(k,s,n)=k$.
\end{thm}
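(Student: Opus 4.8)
The plan is to simply combine the two one-sided bounds already established in the excerpt. The statement claims an exact value $D(k,s,n)=k$, so I need to prove $D(k,s,n)\leq k$ and $D(k,s,n)\geq k$ separately, each under the stated hypotheses (noting that $\binom{k}{s}$ in the statement should read $\binom{s}{k}$, matching the hypothesis $n\geq k^2\binom{s}{k}+k$ of Theorem~\ref{bound}).

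First I would invoke Corollary~\ref{upper}, which gives $D(k,s,n)\leq k$ for \emph{all} $k\leq s\leq n$ with no largeness assumption on $n$. This half therefore requires nothing beyond citing the construction $\mathcal{F}_k$ of the preceding section together with the unique face property and the cone operation~\ref{cone}; in particular it is unconditional and needs no care about the bound on $n$.

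Next I would invoke Theorem~\ref{bound}, which supplies the matching lower bound $D(k,s,n)\geq k$ precisely when $n\geq k^2\binom{s}{k}+k$. Here one should check that the hypothesis of the present theorem is exactly the hypothesis of Theorem~\ref{bound}, so that the counting argument (every $S\in\mathcal{F}$ covers at most $\binom{s}{k}$ of the $\binom{n}{k}$ many $k$-sets, forcing $|\mathcal{F}|\geq\binom{n}{k}/\binom{s}{k}>\sum_{i=0}^{k-1}\binom{n}{i}$, whence Sauer--Shelah applies) goes through. Putting the two inequalities together yields $D(k,s,n)=k$.

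Since both ingredients are already proved, there is essentially no obstacle; the only thing to be careful about is the bookkeeping of hypotheses — making sure the largeness condition on $n$ is used only where the lower bound genuinely needs it, and that the typo $\binom{k}{s}$ versus $\binom{s}{k}$ does not propagate. If one wanted a self-contained statement one could also remark that $n\geq k^2\binom{s}{k}+k\geq 2k$ guarantees $s\leq n$, so that Corollary~\ref{upper} is applicable in the first place.
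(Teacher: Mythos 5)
Your proposal is correct and coincides with the paper's own argument: Theorem~\ref{main} is obtained exactly by combining the unconditional upper bound of Corollary~\ref{upper} with the lower bound of Theorem~\ref{bound} under the largeness hypothesis on $n$ (and you rightly note that $\binom{k}{s}$ in the statement is a typo for $\binom{s}{k}$). Nothing further is needed.
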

We end on a few natural questions arising from our analysis of the function $D(k,s,n)$. 
\begin{que}[Stabilization estimates]
    Theorem \ref{main} gives us that for fixed $k\leq s$, the function $D(k,s,-)$ stabilizes eventually on the value $k$, so given $k,s$, we can define $\Stab(k,s)$ to be the first $N$ such that for every $n\geq N$ we have $D(k,s,n)=k$. What is the actual growth rate of $\Stab(k,s)$? 
\end{que}
Another natural question is for the values that the function $D(k,s,-)$ attains before stabilizing.
\begin{que}[Surjectivity]
    Does the function $D(k,s,-)$ attain every natural number below $k$? 
\end{que}
If we had considered family of sets of size \emph{at most s} rather than \emph{exactly s}, the following question would have an immediate positive answer. However, in this case it seems quite tricky, and might have a negative answer due to divisibility issues.
\begin{que}[Monotonicity]
    When fixing two of the parameters and varying the third of $k,s,n$, is the corresponding function monotone?
\end{que}
\printbibliography

\end{document}